\date{} 
\newtheorem{theorem}{Theorem}[section]
\newtheorem{example}{Example}[section]
\begin{document} 
\centerline{\bf{Collective marks and first passage times} }
\centerline{}

\centerline{\bf {Yiping ZHANG}\textsuperscript{a}} 

\centerline{\bf {Myron HLYNKA}\textsuperscript{a}} 

\centerline{\bf {Percy H. BRILL}\textsuperscript{a,b}} 

\centerline{} 
\centerline { Department of Mathematics and Statistics\textsuperscript{a}} 
\centerline{ School of Business\textsuperscript{b}}
\centerline{University of Windsor} 

\centerline{Windsor, Ontario, Canada N9B 3P4}

\centerline{}

\begin{abstract}Probability generating functions for first passage times of Markov chains are found using the method of collective marks. A system of equations is found which can be used to obtain moments of the first passage times. 
\end{abstract} 

{\bf AMS Subject Classification:} 60J05, 60J22 \\ 

{\bf Keywords:} Markov chains, first passage times, collective marks
\section{Introduction} 

Suppose we have a Markov chain with $n$ states labeled $1,2,\dots, n$. 

Define the random variable $X_{ij}$ to be the number of steps needed to move from state $i$ to state $j$ for the first time. We refer to $X_{ij}$ as the first passage time. Define the first passage probability as $f_{ij}(k)=P(X_{ij}=k)$. 
There are several ways to compute the first passage probabilities. For example, see Hunter (\cite{Hun}) and Kao (\cite{Kao}). 
First passage probabilities are important as they can be used to control processes and determine when to implement parameter changes. 

Suppose we have a probability mass function for a discrete random variable $X$ that takes on value $k$ with probability $p_k$ for $k=0,1,\dots$. Define the probability generating function for $X$ to be $\psi_{X}(z)=\sum_{k=0}^{\infty}p_k z^k$. Alfa (\cite{Alf}, p. 76) gives an expression for the probability generating function of the first passage probabilities from state $i$ to state $j$ as follows.
\begin{equation*}
\psi_{ij}(z)=\dfrac{P_{ij}(z)}{1-P_{ij}(z)}
\end{equation*}
where $P_{ij}(z)=\sum_{k=1}^{\infty}p_{ij}^{(k)}z^k$. But this is not a closed form since we need the values $p_{ij}^{(k)}$. 

The method of collective marks was originated by van Dantzig (\cite{Van}), and discussed in Runnenburg (\cite{Run}) and Kleinrock (\cite{Kle}, chapter 7). The method gives a probabilistic interpretation of a probability generating function $\sum_{k=0}^{\infty}p_k z^k$. Let $z$ be the probability that an item is ``marked.'' Then $p_k z^k$ represents the probability that random variable $X$ takes on the value $k$ and each of the $k$ counts is marked. Summing over all $k$ gives the total probability that all items from a single realization of the random variable $X$ are marked. 

In this paper, we use the collective marks method to find the probability generating function for first passage probabilities, in a closed form for a fixed number of states $n$. We find expressions for moments of the first passage times. We present a method to find probability generating functions of second passage times. 
 
\section{Computing first passage probabilities}

\begin{theorem}
\label{A}
Let $\psi_{ij}(z)$ be the probability generating function for the first passage random variable from $i$ to $j$ for an $n$ state Markov chain.  Then we obtain an equation,
\begin{equation*}
\psi_{ij}(z)=p_{ij}z+\sum_{k:k\neq j}p_{ik}z\psi_{kj}(z)
\end{equation*}
\end{theorem}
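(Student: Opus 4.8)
The plan is to argue directly from the collective-marks interpretation rather than by manipulating the power series. Mark every step of the chain independently with probability $z$. As recalled in the introduction, $\psi_{ij}(z)=\sum_{k\ge 1}f_{ij}(k)z^k$ is then precisely the probability of the event $A_{ij}$ that the chain started in state $i$ eventually reaches $j$ and that each of the (finitely many) steps of this first passage is marked. For $|z|<1$ an infinite first passage contributes nothing, since it would require infinitely many marks, so there is no issue when $j$ is not reached with probability one; the identity is then extended to all $z$ by analytic continuation of the power series.

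First I would condition on the first transition out of state $i$. With probability $p_{ij}$ the chain moves directly to $j$; then the first passage consists of a single step, and $A_{ij}$ occurs exactly when that step is marked, an event of probability $z$. This produces the term $p_{ij}z$.

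Next, with probability $p_{ik}$ for some $k\neq j$, the chain moves to $k$ without yet having reached $j$. For $A_{ij}$ to occur we need this first step marked (probability $z$, independent of everything else) and then the chain must proceed from $k$ to $j$ for the first time with all of those subsequent steps marked. By the Markov property, conditionally on occupying state $k$ after one step the future evolution is a copy of the chain started from $k$, and the marks on later steps are independent of the first mark; hence the conditional probability of completing a fully marked first passage out of $k$ is $\psi_{kj}(z)$, contributing $p_{ik}z\,\psi_{kj}(z)$. Summing these mutually exclusive and exhaustive cases (the first transition lands in exactly one state, either $j$ or some $k\neq j$) via the law of total probability gives $\psi_{ij}(z)=p_{ij}z+\sum_{k:k\neq j}p_{ik}z\,\psi_{kj}(z)$.

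The step requiring the most care is the middle one: one must check that once the chain has stepped to a state $k\neq j$ it has genuinely not yet visited $j$, so that the residual time is exactly the first passage time $X_{kj}$ (and not something conditioned on a prior visit), and that the marking indicators of distinct steps are independent, so that the relevant generating functions multiply. Both facts follow from the definition of the first passage time together with the (one-step) Markov property, so the whole argument is a first-step decomposition expressed in the language of marks.
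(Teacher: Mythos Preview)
Your argument is correct and follows essentially the same route as the paper: interpret $\psi_{ij}(z)$ via collective marks as the probability that a first passage from $i$ to $j$ has every step marked, condition on the first transition, and sum over the mutually exclusive cases $k=j$ and $k\neq j$. You add some extra care (independence of marks, the Markov property, analytic continuation for $|z|\ge 1$), but the underlying decomposition and logic match the paper's proof exactly.
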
 
\begin{proof}
By the method of collective marks, $\psi_{ij}(z)$ represents the probability that the path starting from $i$ and reaching $j$ for the first time has all of its steps receiving a mark. Here the probability of a step being marked is assumed to be $z$. The first step may enter state $j$ immediately and this occurs with probability $p_{ij}$. The probability that the singleton path is marked is $z$. So $p_{ij}z$ is the probability that the first passage probability consists of 1 step and is marked. Otherwise, the process goes to some other state $k$ with probability $p_{ik}$ and that step is marked with probability $z$. From the new position $k$, the process moves to state $j$ eventually with each step being marked with probability generating function $\psi_{kj}(z)$. Summing over all cases gives the result. 
\end{proof}

\noindent
Note: The equation in our theorem involves the  generating functions $\psi_{kj}(z)$ (for all $k$) and we can get a similar equation for each of these. For fixed $j$, this will give us a linear system of equations  in the variables $\psi_{1j}(z),\dots,\psi_{nj}(z)$, which can be solved to get any particular first passage generating function desired as a non linear function of $z$. The coefficients in the system of equations may involve $z$ as well as constants. 

\begin{theorem}
\label{B} 
Let $\psi_{13}(z)$ be the probability generating function for the first passage random variable from $1$ to $3$ for an $3$ state Markov chain.  Then 
\begin{equation*}
\psi_{13}(z)=\dfrac{p_{13}z+(p_{12}p_{23}-p_{13}p_{22})z^2}
{1-(p_{11}+p_{22})z+(p_{11}p_{22}-p_{12}p_{21})z^2}
\end{equation*}
\end{theorem}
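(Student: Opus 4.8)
The plan is to specialize Theorem~\ref{A} to the case $n=3$, $j=3$, and solve the resulting $3\times 3$ linear system for $\psi_{13}(z)$. Writing the equation of Theorem~\ref{A} for $i=1$ and $i=2$ (the index $i=3$ is not needed, since there the path has length $0$... actually we need the equations only for $i\neq j$), I obtain
\begin{align*}
\psi_{13}(z) &= p_{13}z + p_{11}z\,\psi_{13}(z) + p_{12}z\,\psi_{23}(z),\\
\psi_{23}(z) &= p_{23}z + p_{21}z\,\psi_{13}(z) + p_{22}z\,\psi_{23}(z).
\end{align*}
This is a linear system in the two unknowns $\psi_{13}(z)$ and $\psi_{23}(z)$, with coefficients that are linear in $z$; note that the $i=3$ equation decouples because the sum in Theorem~\ref{A} runs over $k\neq j$, so the states other than $1$ and $2$ contribute nothing here.

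Next I would rearrange into matrix form
\[
\begin{pmatrix} 1-p_{11}z & -p_{12}z \\ -p_{21}z & 1-p_{22}z \end{pmatrix}
\begin{pmatrix} \psi_{13}(z) \\ \psi_{23}(z) \end{pmatrix}
= \begin{pmatrix} p_{13}z \\ p_{23}z \end{pmatrix},
\]
and apply Cramer's rule. The determinant of the coefficient matrix is
\[
(1-p_{11}z)(1-p_{22}z) - p_{12}p_{21}z^2 = 1 - (p_{11}+p_{22})z + (p_{11}p_{22}-p_{12}p_{21})z^2,
\]
which is exactly the claimed denominator. For the numerator I replace the first column by the right-hand side and expand the $2\times 2$ determinant:
\[
p_{13}z(1-p_{22}z) - p_{23}z\cdot(-p_{12}z) = p_{13}z + (p_{12}p_{23} - p_{13}p_{22})z^2,
\]
which matches the claimed numerator. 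Dividing gives the stated formula.

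The only thing to be careful about is the bookkeeping of signs and the range of the sum in Theorem~\ref{A} — getting the off-diagonal entries $-p_{12}z$, $-p_{21}z$ right and remembering that the $k=j$ term is excluded (so there is no $p_{13}z\,\psi_{33}(z)$ term, etc.). There is no genuine analytic obstacle: everything is a finite linear-algebra computation valid as an identity of rational functions in $z$, so I do not even need to worry about convergence of the generating-function series for the algebraic manipulation. One might add a remark that the denominator is $\det(I - zQ)$ where $Q$ is the $2\times 2$ submatrix of the transition matrix on the non-target states $\{1,2\}$, which both explains the structure and suggests the general-$n$ pattern.
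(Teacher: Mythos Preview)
Your proof is correct and follows essentially the same approach as the paper: both specialize Theorem~\ref{A} to $i=1,2$ with $j=3$, obtain the same $2\times2$ linear system in $\psi_{13}(z)$ and $\psi_{23}(z)$, and solve it. You are simply more explicit, writing out the matrix form and Cramer's rule computation, whereas the paper just says ``solving this system of two equations in two unknowns gives our result''; your added remark that the denominator is $\det(I-zQ)$ for the substochastic block $Q$ on $\{1,2\}$ is a nice structural observation not in the paper.
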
 
\begin{proof}
From Theorem \ref{A}, we have
\begin{align*}
\psi_{13}(z)&=p_{11}z\psi_{13}(z)+p_{12}z\psi_{23}(z)+p_{13}z\\
\psi_{23}(z)&=p_{21}z\psi_{13}(z)+p_{22}z\psi_{23}(z)+p_{23}z
\end{align*}
 Solving this system of two equations in two unknowns gives our result. 
\end{proof}

\noindent
Note: \\
(a) A similar result holds for any pair, not just $(i,j)$.\\
(b) Our method manages to obtain a closed form for the probability generating function of the first passage  times for 3 state Markov chains\\
(c) Theorem \ref{B} can be extended to a larger number number of states as we still essentially get a linear system to solve. \\
(d) Although the system of equations is linear in the $\psi_{ij}(z)$ unknowns, the coefficients involve the variable $z$, the the resulting expressions are nonlinear functions of $z$.   

\section{Example}
\begin{example}\label{C}
Consider  the Markov transition matrix
$P=\begin{bmatrix}
.2 &.4&.4\\
.3&.3&.4\\
.5&.4&.1
\end{bmatrix}$
We will compute first passage probability generating functions for $\psi_{13}(z)$, $\psi_{23}(z)$ , and $\psi_{33}(z)$.
For the first two we use theorem \ref{B} (with appropriate changes for $\psi_{23}(z)$, and for the third, we get a separate equation.  
\end{example}
According to Theorem \ref{B}, the probability generating function for the first passage probabilities from state 1 to state 3 is given by 
\begin{equation*}
\psi_{13}(z)=\dfrac{.4z+(.4*.4-.4*.3)z^2}{1-(.2+.3)z+(.2*.3-.4*.3)z^2}= \dfrac{.4z+.04z^2}{1-.5z-.06z^2}
\end{equation*}  
We use the Maple command \\
$series({\frac { 0.4\,z+ 0.04\,{z}^2}{1- 0.5\,z- 0.06\,z^2}},z,8)$\\
to find the Taylor expansion and get results.\\
\begin{equation*}
\psi_{13}(z)= 0.4z+ 0.24z^2+ 0.144z^3+ 0.0864z^4+ 0.05184z^5+ 0.031104z^6+ 0.0186624z^7+ \dots
\end{equation*}
This result agrees with other methods. \\
In a similar manner, we find 
\begin{equation*}
\psi_{23}(z)=\dfrac{.4z+(.3*.4-.4*.2)z^2}{1-(.3+.2)z+(.3*.2-.3*.4)z^2}= \dfrac{.4z+.04z^2}{1-.5z-.06z^2}
\end{equation*}  
Finally,
\begin{align*}
\psi_{33}(z)&=p_{33}z+p_{31}z\psi_{13}(z) +\psi_{32}z\psi_{23}(z) =.1z+.5\psi_{13}(z)+.4\psi_{23}(z)\\
&= \dfrac{.1z-.05z^2-.006z^3+.2z^2+.02z^3+.16z^2+.016z^3}{1-.5z-.06z^2}=\dfrac{.1z+.31z^2+.03z^3}{1-.5z-.06z^2}
\end{align*}  

\section{Moments of first passage times}

Theorem \ref{B} gives an expression for $\psi_{ij}(z)$ so we can find the moments of the first passage probabilities by simply taking derivatives and evaluating the expressions at $z=1$, making any additional computations needed. But this explicitly requires solving for   $\psi_{ij}(z)$ which can be a somewhat burdensome task as the coefficients of the linear system involve the variable $z$.

Theorem \ref{A} gives an equation for $\psi_{ij}(z)$ involving the probability generating function of first passage times from $i$ to $j$ and  since we have similar expressions for $\psi_{kj}(z)$ (for $k\neq j$), we have a system of equations that  we can work with.  We can take the derivative of the SYSTEM of equations, and then substitute $z=1$ into the system to create a much more tractible system of equations. Of course, $\psi_{ij}(1)=1$ and $\psi_{ij}^{\prime}(1)=\mu_{ij}$ where $\mu_{ij}=E(X_{ij}$, where $X_{ij}$ is the number of steps needed to reach state $j$ from state $i$ for the first time.  Also, $\psi_{ij}^{(2)}(1)=E(X_{ij}(X_{ij}-1)).$ 
\begin{example}
We use the same $3 \times 3$ transition matrix as in Example \ref{C}
\end{example}
The system of equations from Theorem \ref{A} is
\begin{align*}
\psi_{13}(z)&=.2z\psi_{13}(z)+.4z\psi_{23}(z)+.4z\\
\psi_{23}(z)&=.3z\psi_{13}(z)+.3z\psi_{23}(z)+.4z
\end{align*}

Taking derivatives gives
\begin{align*}
\psi_{13}^{\prime}(z)&=.2\psi_{13}(z)+.2z\psi_{13}^{\prime}(z)+.4\psi_{23}(z)+.4z\psi_{23}^{\prime}(z)+.4\\
\psi_{23}^{\prime}(z)&=.3\psi_{13}(z)+.3z\psi_{13}^{\prime}(z)+.3\psi_{23}(z)+.3z\psi_{23}^{\prime}(z)+.4
\end{align*}

Evaluating at $z=1$ gives
\begin{align*}
\mu_{13}=.2+.2\mu_{13}+.4+.4\mu_{23}+.4=1+.2\mu_{13}+.4\mu_{23}\\
\mu_{23}=.3+.3\mu_{13}+.3+.3\mu_{23}+.4=1+.3\mu_{13}+.3\mu_{23}\\
\end{align*}
Solving these gives $\mu_{13}=2.5$ and $\mu_{23}=2.5$.

\section{Second passage times}

\begin{theorem}
Let $Y_{ij}$ be the random variable representing the number of steps needed to move from $i$ to $j$ for the second time. 
Then the probability generating function for $Y_{ij}$ is $\psi_{ij}(z) \psi_{jj}(z)$
\end{theorem}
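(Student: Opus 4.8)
The plan is to use the method of collective marks directly, mirroring the proof of Theorem~\ref{A}. By the collective marks interpretation, the product $\psi_{ij}(z)\,\psi_{jj}(z)$ is the probability that an independently marked version of the path is entirely marked, where each step is marked with probability $z$, provided we can identify this product with the generating function of $Y_{ij}$.

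First I would decompose the random variable $Y_{ij}$. The process starts at state $i$, runs until it first hits $j$ (taking $X_{ij}$ steps), and then, by the strong Markov property, restarts at $j$ and runs until it returns to $j$ (taking an additional $X_{jj}$ steps, the first return time to $j$). Hence $Y_{ij} = X_{ij} + X_{jj}'$, where $X_{jj}'$ is distributed as the first-passage (first-return) time from $j$ to $j$ and is independent of $X_{ij}$ because the two legs depend on disjoint portions of the sample path after the respective stopping times. The independence is the crux: it follows from the strong Markov property applied at the first-hitting time of $j$.

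Second, I would invoke the standard fact that the generating function of a sum of independent nonnegative integer-valued random variables is the product of the generating functions: $\psi_{Y_{ij}}(z) = \psi_{X_{ij}}(z)\,\psi_{X_{jj}}(z) = \psi_{ij}(z)\,\psi_{jj}(z)$. Alternatively, and perhaps more in the spirit of the paper, I would phrase this via collective marks: the event that all $Y_{ij}$ steps are marked is the intersection of the independent events that all steps on the first leg are marked (probability $\psi_{ij}(z)$ when marks are i.i.d.\ with parameter $z$) and that all steps on the second leg are marked (probability $\psi_{jj}(z)$), so the joint probability is the product; summing the marking interpretation back out recovers the generating function identity.

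The main obstacle is making the independence and the ``restart'' rigorous: one must be careful that $X_{jj}$ here means the first return time to $j$ (a passage from $j$ back to $j$ in at least one step), which is exactly what $\psi_{jj}(z)$ denotes in the paper's notation, and that the second leg genuinely has the same law as a fresh first-passage from $j$ to $j$ regardless of how the first leg ended (it always ends at $j$). Once the strong Markov property is cited, the remaining steps are routine.
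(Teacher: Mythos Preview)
Your proposal is correct and follows essentially the same route as the paper: decompose $Y_{ij}=X_{ij}+X_{jj}$ into two independent first-passage legs and use that the pgf of a sum of independent variables is the product of the pgfs. Your version is more careful (explicitly invoking the strong Markov property for independence and offering a collective-marks rephrasing), but the underlying argument is identical.
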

\begin{proof}
$Y_{ij}=X_{ij}+X_{jj}$ where $X_{ij}$ is the first passage random variable, so $Y_{ij}$ is just the convolution of two independent random variables.
Since the pgf of a convolution is the product of the pgf's of each part, the result follows. 
\end{proof} 

\begin{example}
We will compute the second passage time from state 1 to state 3 in the Markov chain with transition matrix $P=\begin{bmatrix}
.2 &.4&.4\\
.3&.3&.4\\
.5&.4&.1
\end{bmatrix}$
We earlier calculated \\
$\psi_{13}(z)=\dfrac{.4z+.04z^2}{1-.5z-.06z^2}$ and $\psi_{33}(z)=\dfrac{.1z+.31z^2+.03z^3}{1-.5z-.06z^2}$ so \\
$\psi_{second}(z)=\dfrac{( .4z+.04z^2)(.1z+.31z^2+.03z^3)}{(1-.5z-.06z^2)^2}$.
If we expand this (using MAPLE) into a Taylor series, we get\\
$\psi_{second}(z)=0.04z^{2}+ 0.168z^{3}+ 0.1872z^{4}+ 0.16416z^{5}+\dots$\\
Thus, for example,  the probability of moving from 1 to 3 for the second time on step 4 is 0.1872.
\end{example}
In a similar manner,we can obtain higher order passage probabilities. 
\bigskip

\noindent
{\bf Acknowledgments.} 
We acknowledge funding and support  from NSERC (Natural Sciences and Engineering Reseach Council of Canada).

\end{document}